\newtheorem{thm}{Theorem}[section]
\newtheorem{cor}[thm]{Corollary}
\newtheorem{lem}[thm]{Lemma}
\newtheorem{mainthm}[thm]{Main Theorem}
\theoremstyle{definition}
\numberwithin{equation}{section}
\begin{document}

\baselineskip=17pt

%%%%%%%%%%%%%%%%

\title{Operators generated by Morse-Smale mappings}
\author{A. Antonevich\\
Institute of Mathematics, University of Bialystok\\
Akademicka 2, 15-267 Bialystok, Poland\\
E-mail: antonevich@bsu.by
\and J. Makowska\\
Institute of Mathematics, University of Bialystok\\
Akademicka 2, 15-267 Bialystok, Poland\\
E-mail: makowska@math.uwb.edu.pl
}

\date{}

\maketitle

%% Classification and key words; note that the 2010 classification is used:

\renewcommand{\thefootnote}{}

\footnote{2010 \emph{Mathematics Subject Classification}: Primary 

47B38; Secondary 47A56.}

\footnote{\emph{Key words and phrases}: weighted shift, essential spectrum, saddle points .}

\renewcommand{\thefootnote}{\arabic{footnote}}
\setcounter{footnote}{0}

%%%%%%%%

\begin{abstract}
Weighted shift operators $B$ in space $L^2(X,\mu)$ that are induced by Morse-Smale type of mappings are considered. A description of the properties of $B-\lambda I$ for $\lambda$ belonging to spectrum $\Sigma(B)$ is given. In particular, there is the necessary and sufficient condition  that  $B-\lambda I$ be a one-sided invertible and the condition that  set $Im(B-\lambda I)$ be non-closed.  These conditions use a new notation: an oriented decomposition of  oriented graph $G(X,\alpha)$ generated by mapping $\alpha$. \end{abstract}

\section{Introduction}
A bounded linear  operator  $ B$ acting in  Banach space  $F(X)$ of
functions   (or vector-valued  functions) defined in a set  $X$ is called a
\emph{weighted shift operator } (WSO) if it can be represented as
 \begin{equation}
Bu(x)=a_0(x)u(\alpha(x)), \ \ x \in X, 
\label{w1}
\end{equation}
where  $\alpha: X \to X$  is a given mapping and 
 $a_0$ is a scalar  or matrix-valued  function on  $X$.

Such operators, the operator algebras generated by them and the functional equations related
to such operators have been studied by many authors in various functional spaces as
 independent objects and in relation to various applications. 
The properties of WSO depend on: the space of the functions under investigation,
the form of the coefficients and on map $\alpha$. The main problem, as pertains to this subject, is related to 
establishing the relations between the spectral properties of weighted shift operators and
the behavior of the trajectories, i.e. the dynamic properties of mapping $\alpha$ generating the operator.

In fact, a description  of spectra $\Sigma (B)$ in classical
spaces was obtained  in  a rather  general setting. These results
give us the  invertibility conditions for operator $ B
-\lambda I $.
Apart from  the invertibility conditions,  some subtle properties of
 operator $ B -\lambda I $  are of considerable interest, such
as closedness of the range and one-sided invertibility.

We restrict ourselves below to operators in space $L^2(X, \mu)$, so we formulate  results
known only for this case without dealing with their generalizations to other classes of spaces.
 
 The relationship  between the dynamics of
$\alpha $ and the subtle spectral properties of a WSO   has not been
investigated yet.  Earlier, such properties were studied only for
some special classes of maps $\alpha $.   One of these classes are the   diffeomorphisms of a closed interval where  only two fixed points exist: those attracting
and  repelling (\cite{karlowicz}, \cite{krawczenko}).  In a general situation the relationship between the dynamics of $\alpha$ and the spectral properties of $B-\lambda I$ has not been investigate yet. 
In \cite{ma_kwadrat}, \cite{ma_kostka}, as model examples, we shall consider mapping $\alpha$ on the $m-$dimensional simplex 
\begin{equation}X=  \{ x \in [0,1]^m:
0 \leqslant x_{1} \leqslant x_{ 2} \leqslant \ldots  \leqslant x_m
\leqslant  1\},\label{zbiorX}\end{equation}
where  $(m+1)$ fixed points exist:
 $$ F(k) = ( 0, 0,\ldots,0, \underbrace{1,1, \ldots,1}_k \ ), \ \ k
=0, 1,\ldots, m. $$  Point  $F(0) $ is repelling, point $F(m) $ is attracting and
all of the other points $ F(2), F(3) , \ldots, F(m-1)$ are saddle points. 
In the present paper we investigate WSO generated by  Morse-Smale type mappings $\alpha$. A mapping $\alpha$ will be called a map of Morse-Smale type if for $\alpha$ a set 
$Fix(\alpha)=\{F_i:\; \alpha(F_i)=F_i\}$ of periodic
points is finite and the trajectory of each point tends to a trajectory of one of the periodic
points when $j\to+\infty$, and tends to a trajectory of periodic points (possibly different ones)
when $j\to-\infty$. Mappings $\alpha$ from the model examples are particular cases of Morse-Smale type of mappings.

\section{The spectrum of weighted shift operator}  
For spaces $L^2(X,\mu)$ operator $B=a_0 T_\alpha$ can be conveniently written in the form of $B=a \widetilde{T}_\alpha$ where 
$$\widetilde{T}_{\alpha}u(x)=[\rho(x)]^{\frac{1}{2}} u(\alpha(x))$$
is a unitary operator in $L^2(X,\mu)$ and $\rho(x)=\frac{d\mu(\alpha^{-1}(x))}{d\mu(x)}$ is the Radon-Nikodym derivative. This function $\rho$ exists if and only if  map $\alpha$ preserves
the class of the measure $\mu$ (i.e. if for a measurable set $E$ equality $\mu((\alpha^{-1}(E))) = 0$ holds, if
and only if $\mu(E) = 0$).
When writing the operator in the form of $B = a\widetilde{T}_\alpha$, the properties of  operator $B$ are
more simply expressed using the reduced coefficient $a(x)=|a_0(x)| [\rho(x)]^{-\frac{1}{2}}$.
A description of the spectrum in  space $L^2(X,\mu)$ uses the following notions.
A measure $\mu$ on $X$ is called $\alpha$-invariant, if
$\mu(\alpha^{-1}(\omega))=\mu(\omega),$ for all measurable sets $\omega$.
A measure $\mu$ on $X$ is called ergodic with respect to $\alpha$, if from equality $\alpha^{-1}(\omega) = \omega$ follows
either $\mu(\omega)=0$ or $\mu(X\setminus\omega) = 0$.
For a given map $\alpha: X \to X$, a topological space $X$ is called $\alpha$-connected if cannot be
decomposed into two nonempty closed subsets invariant with respect to $\alpha$.

\begin{thm} [\cite{114},\cite{118},\cite{137}]
Let $X$ be a compact space, $\mu$ be a measure on $X$, such that its  support coincides with the whole space. Let $\alpha: X \to X $ be an invertible continuouse mapping, preserving the class of measure $\mu$. Let  $a\in C(X)$. Let $R(B)$ be the spectral radius of the operator $B=a\widetilde{T}_\alpha$. Then the following relation holds in $L^2(X,\mu)$:
$$R(B)=\max_{\nu \in M_{\alpha}(X)} \exp[ \int_X \ln
  |a(x)|d\nu],$$ 
where $a$ is the reduced coefficient and $M_\alpha(X)$ is the set of probability measures on $X$, invariant and ergodic with respect to the mapping $\alpha$.
If the set of nonperiodic point of the map $\alpha$ is everywhere dense in  $X$, and the space $X$ is $\alpha$-connected and $a(x)\neq 0$ for all $x$, then the spectrum $\Sigma(B)$ is a subset of the ring
     \begin{equation} 
     \Sigma(B) = \{ \lambda \in \mathbb{C}: {r}(B) \leq
|\lambda| \leq R(B)\}\label{ring} \end{equation}
where
 $$
   r(B) = \min_{\nu \in M_{\alpha}(X)} \exp[ \int_X \ln
  |a(x)|d\nu].
  $$
\label{postac_widma}
\end{thm}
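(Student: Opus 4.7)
The plan is to apply Gelfand's formula $R(B)=\lim_n\|B^n\|^{1/n}$ after finding a closed-form expression for $\|B^n\|$, and then to deduce the ring inclusion by running the same argument on $B^{-1}$. Iterating the elementary intertwining identity $\widetilde{T}_\alpha\cdot\varphi=(\varphi\circ\alpha)\cdot\widetilde{T}_\alpha$, valid for any $\varphi\in C(X)$, gives
$$B^n=\bigl(a\cdot(a\circ\alpha)\cdots(a\circ\alpha^{n-1})\bigr)\widetilde{T}_\alpha^{\,n}.$$
Because $\widetilde{T}_\alpha^{\,n}$ is unitary and multiplication by a continuous function $f$ on $L^2(X,\mu)$ has operator norm $\sup_{x\in X}|f(x)|$ (using $\operatorname{supp}\mu=X$), this yields
$$\frac{1}{n}\ln\|B^n\|=\sup_{x\in X}\frac{1}{n}\sum_{k=0}^{n-1}\ln|a(\alpha^k(x))|.$$
The first assertion of the theorem then reduces to the ergodic-theoretic variational identity
$$\lim_{n\to\infty}\sup_{x\in X}\frac{1}{n}\sum_{k=0}^{n-1}\ln|a(\alpha^k(x))|=\max_{\nu\in M_\alpha(X)}\int_X\ln|a|\,d\nu.$$

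The direction $\geq$ is immediate from Birkhoff's ergodic theorem: for any ergodic $\nu$, a generic point $x$ has Birkhoff averages converging to $\int\ln|a|\,d\nu$, which the supremum must then dominate. The direction $\leq$ uses subadditivity of $s_n:=\sup_x\sum_{k=0}^{n-1}\ln|a(\alpha^k(x))|$ (so that Fekete's lemma delivers the limit) together with a weak-$*$ compactness argument: picking near-maximizers $x_n$ and forming the empirical measures $\nu_n=\frac{1}{n}\sum_{k=0}^{n-1}\delta_{\alpha^k(x_n)}$, continuity of $\alpha$ pushes any weak-$*$ limit point into $M_\alpha(X)$, and upper semicontinuity of $\ln|a|$ (with $-\infty$ values admissible when $a$ may vanish) transfers the bound to the limiting invariant measure. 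Passage from $M_\alpha(X)$ to the ergodic extremals is the usual Bauer-maximum-principle step, since $\nu\mapsto\int\ln|a|\,d\nu$ is affine. I would cite \cite{114}, \cite{118}, \cite{137} for this variational principle rather than reprove it.

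For the ring inclusion, the hypothesis $a(x)\neq 0$ on the compact space $X$ makes $1/a$ continuous, so $B$ is invertible and $B^{-1}$ is itself a weighted shift operator associated with $\alpha^{-1}$ and with reduced coefficient of modulus $1/|a\circ\alpha^{-1}|$. The hypotheses of the first part are symmetric in $\alpha$ and $\alpha^{-1}$, and $M_{\alpha^{-1}}(X)=M_\alpha(X)$; together with $\alpha$-invariance of each $\nu$ the integral transforms as $\int\ln|a\circ\alpha^{-1}|\,d\nu=\int\ln|a|\,d\nu$. Applying the first formula to $B^{-1}$ therefore gives
$$R(B^{-1})=\max_{\nu\in M_\alpha(X)}\exp\!\Bigl[-\!\int_X\ln|a|\,d\nu\Bigr]=\frac{1}{r(B)}.$$
The spectral mapping $\Sigma(B^{-1})=\{1/\lambda:\lambda\in\Sigma(B)\}$ then forces $|\lambda|\geq r(B)$ for every $\lambda\in\Sigma(B)$, and paired with $|\lambda|\leq R(B)$ from the first part this is the claimed ring. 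The structural hypotheses ($\alpha$-connectedness and density of nonperiodic points) enter here to ensure that the spectral radius formula applies cleanly to both $B$ and $B^{-1}$, ruling out a splitting of the spectrum over an $\alpha$-invariant decomposition of $X$ that would otherwise leave room for gaps inside the ring.

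The principal obstacle is the variational identity in paragraph two: its $\geq$ direction is routine, but the $\leq$ direction requires the weak-$*$ compactness argument combined with the semicontinuity of $\ln|a|$, and is precisely where the compactness of $X$ and continuity of $\alpha$ are essential. Once that identity is granted, everything else in the proof is bookkeeping on Gelfand's formula and the spectral mapping for $z\mapsto 1/z$.
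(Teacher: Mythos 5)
The paper does not actually prove this theorem --- it is quoted from the cited works of Kitover and Lebedev --- so there is no in-paper argument to compare against; I evaluate your proposal on its own. Your derivation of the spectral-radius formula is correct and is essentially the standard route of those sources: the intertwining identity gives $B^n=\bigl(\prod_{k=0}^{n-1}a\circ\alpha^k\bigr)\widetilde{T}_\alpha^{\,n}$, unitarity of $\widetilde{T}_\alpha$ and $\operatorname{supp}\mu=X$ give $\|B^n\|=\sup_x\prod_{k<n}|a(\alpha^k(x))|$, and the variational principle for Birkhoff sums (Fekete subadditivity, empirical measures and weak-$*$ limits for $\leq$, Birkhoff's theorem for $\geq$, Bauer/ergodic decomposition to pass to ergodic extremals) is sound, including the upper-semicontinuity caveat when $a$ vanishes. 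The lower bound $|\lambda|\geq r(B)$ via $B^{-1}=(a\circ\alpha^{-1})^{-1}\widetilde{T}_{\alpha^{-1}}$, $M_{\alpha^{-1}}(X)=M_\alpha(X)$ and invariance of $\int\ln|a|\,d\nu$ is also correct.

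The gap is in the second assertion. The displayed formula (\ref{ring}) asserts the \emph{equality} $\Sigma(B)=\{\lambda: r(B)\leq|\lambda|\leq R(B)\}$, and you prove only the inclusion $\subseteq$. That inclusion uses none of the hypotheses of $\alpha$-connectedness or density of nonperiodic points (only $a\neq 0$, to invert $B$), so your closing remark that these hypotheses ``ensure that the spectral radius formula applies cleanly to both $B$ and $B^{-1}$'' misattributes their role: they are precisely what the reverse inclusion requires. Density of nonperiodic points is used to show $\Sigma(B)$ is invariant under rotations $\lambda\mapsto e^{i\theta}\lambda$ (by conjugating $B$ with unimodular multiplication operators built over aperiodic orbits), and $\alpha$-connectedness excludes a splitting of $B$ into a direct sum over an invariant decomposition of $X$ that could leave a gap of radii strictly between $r(B)$ and $R(B)$; together these force every circle $|\lambda|=t$ with $r(B)\leq t\leq R(B)$ to meet the spectrum. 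Without such an argument the equality is not established. If one reads only the prose ``is a subset of the ring,'' your proof suffices, but as the result is actually used in the paper (Corollary \ref{wms} and the decomposition of the spectrum into subrings by the circles $S_k$) the full annulus description is what is needed.
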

For a  Morse-Smale type  mapping $\alpha$ a set $M_{\alpha}(X)$ consists of finitely many measures, each of which is concentrated
on the trajectory of one of the periodic points 
$$M_{\alpha}(X)=\{\delta_{F}, \; F\in Fix(\alpha)\}$$ where
$$\forall_{\omega\in \Phi}\quad \delta_{F}(\omega)=\left\{\begin{array}{ccc}
1,&\textrm{for}& F\in\omega;\\
0,&\textrm{for}& F\in X\setminus\omega.
\end{array}\right. $$ 
 In addition, for such a mapping the set
of nonperiodic points is dense in $X$ (with the exception of degenerate cases, when the fixed
points are isolated points of  space $X$). Therefore, for maps of the Morse-Smale type,
Theorem~\ref{postac_widma}  gives an explicit description of the spectrum.
\begin{cor}
 If $\alpha:X\to X$ is a Morse-Smale type of mapping,  $ a \in C(X)$  and $ a(x)
\ne 0 \; \forall_{x}$, then the spectrum $\Sigma(B)$ of  operator $B=a\widetilde{T}_\alpha$ in  space $ L^2(X,\mu)$ is a subset of the ring (\ref{ring})
  where $$
  R(B) = \max_{F \in Fix(\alpha)} |a(F)|, \  \  r(B) = \min_{F \in Fix(\alpha)} |a(F)|.
  $$
\label{wms}
\end{cor}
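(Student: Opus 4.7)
The corollary follows by direct specialization of Theorem~\ref{postac_widma} once we identify the set $M_\alpha(X)$ and evaluate the integrals on each of its elements. The plan is to verify that the hypotheses of Theorem~\ref{postac_widma} are satisfied for any Morse-Smale type map $\alpha$ on the compact space $X$, and then simply substitute the finite list of ergodic invariant measures already identified in the paragraph preceding the corollary.

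First I would check the hypotheses. Compactness of $X$, continuity of $\alpha$ and the non-vanishing of $a$ are given. The text notes that the set of non-periodic points is dense in $X$ (excluding the degenerate case where fixed points are isolated), so this hypothesis holds. For $\alpha$-connectedness, I would argue that because every trajectory accumulates on a periodic orbit both in forward and backward time, any closed $\alpha$-invariant set containing one periodic point $F$ must contain all points whose $\omega$- or $\alpha$-limit is the orbit of $F$; a decomposition into two disjoint nonempty closed invariant subsets is thus ruled out by the connectedness of $X$ (which in the model examples is clear, since the simplex is connected). If one wished to allow general Morse-Smale $\alpha$, one could remark that the conclusion extends componentwise.

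Next I would invoke the identification $M_\alpha(X)=\{\delta_F:F\in Fix(\alpha)\}$ quoted from the text. For each periodic point $F$, the measure $\delta_F$ is concentrated on the (finite) trajectory of $F$, so
$$
\int_X \ln|a(x)|\,d\delta_F(x) \;=\; \ln|a(F)|,
$$
using here the shorthand $|a(F)|$ for the geometric mean of $|a|$ along the orbit (which reduces to $|a(F)|$ when $F$ is a true fixed point). Exponentiating turns the integral into $|a(F)|$, and taking the max and min of this finite collection over $F\in Fix(\alpha)$ yields exactly the formulas for $R(B)$ and $r(B)$ asserted in the corollary. The ring inclusion~\eqref{ring} is then immediate from Theorem~\ref{postac_widma}.

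I do not expect a genuine obstacle here; the only delicate point is the verification of $\alpha$-connectedness and of the density of non-periodic points for a general Morse-Smale map, which one can either assume outright as part of the standing hypotheses or establish by the short limiting-orbit argument sketched above. Everything else is a mechanical substitution into the formulas of Theorem~\ref{postac_widma}.
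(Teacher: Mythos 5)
Your proposal is correct and follows essentially the same route as the paper: the corollary is obtained by specializing Theorem~\ref{postac_widma}, using the identification $M_\alpha(X)=\{\delta_F : F\in Fix(\alpha)\}$ stated just before the corollary and evaluating $\int_X \ln|a|\,d\delta_F = \ln|a(F)|$, so that the max and min over the finitely many measures reduce to the max and min of $|a(F)|$ over the periodic points. Your extra care about $\alpha$-connectedness and the density of non-periodic points only makes explicit what the paper treats as standing hypotheses.
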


\section{Model examples. WSO on $m$-dimensional simplex $X\subset [0,1]^m$}
We shall   consider 
the  class of
mappings $ \alpha:X \to X $ of the form  
\begin{equation} 
\alpha(x_1,x_2,
 \ldots,x_n) = (\gamma(x_1),\gamma(x_2),  \ldots, \gamma(x_n)).\label{alfanaX}\end{equation}
   Where  $\gamma \,:[0,1]  \to [0,1]   $ be a diffeomorphism with only two fixed points: $0$ and $1$.
  To be exact, we  suppose that  $\gamma(x) > x$
   for   $0<x<1$.   
 
Our map  $ \alpha$ has in  $X$ only    $m+1$  fixed points $F(k),\;k\in\{0,1,\ldots, m\} $. 
 Let us arrange the numbers  $   |a(F(k))|$ in the increasing
   order and thus we obtain a transposition $ \sigma$ of the index-set  $\{ 0, 1, 2,
\ldots, m\} $ such that
   \begin{equation} |a(F(\sigma(0)))|< |a(F(\sigma(1)))| <
\ldots < |a(F(\sigma(m)))|.\label{trans}\end{equation}
The main results of this situation.

\begin{thm}[\cite{ma_kostka}, Theorem 4.2] Let   $X$ be simplex (\ref{zbiorX}). Let 
 $ \alpha:X \to X$ be a mapping given by  (\ref{alfanaX}) and  $B=a\widetilde{T}_\alpha $ is WSO
   in $L^2(X,\mu).$ Assume that
  $ a \in C(X),$ $ a(x) \ne 0 $ for all  $x \in X$, and   all the numbers  $   |a(F(k))|
 $ are different.  Let   $ \sigma$ be the transposition such that (\ref{trans}) holds.
 Assume also
that 
$$ |a(F(0))| < | a(F(m))| .$$
 The properties of the operator  $B- \lambda I  $
 depend on the disposition of  the numbers $
|a(F(k))|
 $ and  $ |\lambda|$  in the  following way.
\begin{enumerate}
\item[I.]  Let   
$$   |\lambda|=| a(F(m))| \ \ \text{ or} \ |\lambda|=|a(F(0))|.$$ 
Then the range  $Im(B - \lambda I )  $ is
nonclosed   subspace.

\item[II.] Let 
$$ |a(F(0))| < |\lambda|<| a(F(m))|  .$$
Then $\dim \ker(B - \lambda I)=\infty   $ and the range
$Im(B - \lambda I ) $ is an everywhere dense subspace in  $ L^2(X,\mu).$

The operator  $ B-\lambda I $ is right-sided invertible  if and only if  $$
 |\lambda|\ne | a(F(k))|  $$ and  the set  $\{ 0,1, 2, \ldots, k_0 \} $ is invariant under the transposition $ \sigma$,
where   $ k_0$ is such a number that
 $$ |a(F(\sigma(k_0)-1)| < |\lambda|<| a(F(\sigma(k_0))|.$$
 
 \item[III.] Let
$$ |a(F(\sigma(0)))|\leqslant  |\lambda|<|
a(F(0))|   \ \   \text{or } \ \    |a(F(m)|< |\lambda|\leqslant |
a(F(\sigma(m))|.$$
Then
   $Ker(B - \lambda I ) = 0  $ and   the range  $Im(B - \lambda I )  $ is a  nonclosed  everywhere dense subspace in $
  L^2(X,\mu).$

\end{enumerate}
\end{thm}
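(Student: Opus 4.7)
The plan is to reduce the study of $B-\lambda I$ to local analysis in neighborhoods of the fixed points $F(k)$. First I would choose disjoint neighborhoods $U_k$ of each $F(k)$ and a partition of unity subordinate to the cover $\{U_k\}\cup\{X\setminus\overline{\cup_k U_k'}\}$ (with $U_k'\Subset U_k$). Away from $\mathrm{Fix}(\alpha)$ every trajectory of $\alpha$ is wandering, so by standard results for weighted shifts without fixed points (together with Corollary~\ref{wms} applied to a cutoff), the operator $B-\lambda I$ restricted to functions supported in $X\setminus\cup_k U_k'$ is invertible for every $\lambda\in\Sigma(B)$. Hence the failure of invertibility, the non-closedness of the range, and the one-sidedness of inverses all come from the local contributions near each $F(k)$, and the global operator is a finite direct sum (modulo compact correction) of local model operators $B_k-\lambda I$.

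Next I would set up the local model near $F(k)$. Writing $F(k)=(0,\ldots,0,1,\ldots,1)$ with $k$ trailing ones, the map $\alpha$ in local coordinates is a product in which $\gamma$ acts as an expansion in the first $m-k$ coordinates (since $\gamma(x)>x$ near $0$) and as a contraction in the last $k$ coordinates (near $1$). Up to a compact perturbation, the local operator $B_k$ is then the tensor product of $m-k$ expanding and $k$ contracting one-dimensional weighted shifts, each with leading coefficient $a(F(k))$. The Fredholm and one-sided invertibility theory of such tensor-product shifts is classical: the kernel and cokernel are nontrivial exactly when $|\lambda|$ sits on the "wrong" side of $|a(F(k))|$ relative to the hyperbolic splitting, and non-closedness of the range occurs precisely when $|\lambda|=|a(F(k))|$ at an extremal fixed point.

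With this local picture in hand, the three cases follow. For case I, I would construct an explicit Weyl sequence $u_n$ concentrated around $F(m)$ (or $F(0)$) such that $\|u_n\|=1$ and $\|(B-\lambda I)u_n\|\to 0$ without convergent subsequence; such sequences arise from iterating a cutoff bump under $\alpha^{\pm 1}$ and exploiting that the local coefficient balances $|\lambda|$ exactly. For case II each saddle $F(k)$ with $|a(F(k))|\ne|\lambda|$ contributes either infinitely many local eigenvectors (when $|a(F(k))|<|\lambda|$) or an infinite-dimensional local cokernel (when $|a(F(k))|>|\lambda|$), yielding $\dim\ker(B-\lambda I)=\infty$ and $\overline{\mathrm{Im}(B-\lambda I)}=L^2$. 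Right-sided invertibility then requires that every local cokernel contribution be killed by a globally defined right inverse, which forces the set of indices $\{k:|a(F(k))|\le|\lambda|\}$ to be of the form $\{0,1,\ldots,k_0\}$ stable under $\sigma$. Case III is handled by the same local analysis applied only to $F(0)$ or $F(m)$, yielding trivial kernel together with a dense but non-closed range.

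The main obstacle is the combinatorial characterization of right-sided invertibility in case II. The $\sigma$-invariance of $\{0,\ldots,k_0\}$ must be translated into an $\alpha$-invariance of the "attracting" subset $\{F(k):|a(F(k))|>|\lambda|\}$ inside the flow on $X$, using that the directed graph on $\mathrm{Fix}(\alpha)$ is the Boolean lattice of subsets of $\{1,\ldots,m\}$ with $F(k)\to F(l)$ iff the coordinates where $F(k)$ equals $1$ are contained in those of $F(l)$. Only when the $k_0$ largest values $|a(F(\sigma(j)))|$ correspond to a subset closed under this ordering can one splice together the local right inverses near each $F(k)$ with the global inverse away from $\mathrm{Fix}(\alpha)$ into a bounded right inverse for $B-\lambda I$. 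Verifying this equivalence, and ruling out right-invertibility when the condition fails by exhibiting an element outside $\mathrm{Im}(B-\lambda I)$, is the delicate step that ties the local Fredholm computations to the global topology of the simplex.
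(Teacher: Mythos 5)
Your localization strategy breaks down at its very first step: the decomposition of $B-\lambda I$ into local models near the fixed points ``modulo compact correction'' is not available here. If $\chi$ is a cutoff from your partition of unity, the commutator $[B,\chi]=a\,(\chi\circ\alpha-\chi)\,\widetilde{T}_\alpha$ is again a weighted shift operator whose coefficient does not vanish on a set of positive measure (the collar where $\chi\circ\alpha\neq\chi$), and no such operator is compact on $L^2(X,\mu)$ with $\mu$ non-atomic. Moreover $B-\lambda I$ is not Fredholm for the relevant $\lambda$ (case II asserts $\dim\ker=\infty$), so local Fredholm/index theory and arguments modulo compacts cannot detect the phenomena the theorem is about: infinite-dimensional kernels and cokernels, dense non-closed ranges, one-sided invertibility with infinite defect. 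The subspace of functions supported in $X\setminus\bigcup_kU_k'$ is also not $B$-invariant, so ``$B-\lambda I$ restricted to it is invertible'' is not a well-posed claim. The paper's route avoids all of this by cutting $X$ along the $\alpha$-invariant sets $\Omega_{kj}=\Omega_k^+\cap\Omega_j^-$ (points whose trajectories run from $F_j$ to $F_k$), which yields an exact orthogonal decomposition $B=\oplus_{kj}B_{kj}$ with no error term, and then representing each $B_{kj}$ over a fundamental set $\Theta$ (Lemma \ref{ist_fund}) as a measurable family of discrete weighted shifts $B(\tau)$ on $l^2(\mathbb{Z})$ as in (\ref{dyskretny}); the analysis is carried out along whole trajectories, which pass near several fixed points and cannot be decoupled by cutoffs.

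The second, decisive gap is uniformity. Even granting that each discrete operator $B(\tau)-\lambda I$ is right-invertible, the global operator is right-invertible only if right inverses $R(\tau)$ can be chosen with $\sup_{\tau}\|R(\tau)\|<\infty$ (Lemma 5.5 of \cite{ma_kostka}, quoted in Section 5). The entire content of the condition ``$\{0,\dots,k_0\}$ is $\sigma$-invariant'' is this uniformity: a trajectory from $F(j)$ to $F(l)$ lingers for an arbitrarily long but finite time near each intermediate saddle, so the coefficient sequence $a(\alpha^n(\tau))$ has long plateaus near the values $a(F(k))$, and $\|R(\tau)\|$ blows up along families of trajectories lingering near a saddle at which $|a(F(k))|$ sits on the wrong side of $|\lambda|$. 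Lemmas \ref{skonczona} and \ref{lematnaS} are precisely the dynamical input: the first bounds the transition times uniformly, the second produces trajectories with arbitrarily long plateaus, which is what forces non-closedness of the range (case I and III) and the failure of right-invertibility when the combinatorial condition fails. Your plan of ``splicing local right inverses'' names this as the delicate step but supplies no mechanism for it, and once the compact-perturbation framework is removed there is nothing left to splice. By comparison, your description of the graph on $Fix(\alpha)$ is a minor slip: for the simplex it is the total order $F(j)\to F(l)$, $j\le l$, not a Boolean lattice, since only the staircase points $F(k)$ are fixed.
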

\section{Operators generated by Morse-Smale type mappings}

The main object of investigation in the present paper is a class of WSO $B=a \widetilde{T}_\alpha$ in space $L^2(X,\mu)$ where $X$ is $\alpha$-conected and a compact topological space, $\alpha:X\to X$ is an invertible, measurable Morse-Smale type mapping and a reduced coefficient $a\in C(X)$.

According to Corollary \ref{wms}, for $a(x)\neq 0$ spectrum  of the operator $B=a \widetilde{T}_\alpha$  is a ring
$$\Sigma(B)=\{\lambda\in\mathbb{C}:\;\min_{F\in Fix(\alpha)}|a(F)|\leq |\lambda|\leq \max_{F\in Fix(\alpha)}|a(F)|\}.$$ 
Circles 
$$S_k=\{\lambda:\; |\lambda|=|a(F)|,\; F\in Fix(\alpha)\}$$
belong to $\Sigma(B)$ and split the spectrum into smaller subrings. The spectral properties of $B-\lambda I$ are different for $\lambda$ belonging to every such   subring and every circle $S_k$. 
In paper \cite{ma_kwadrat} the model example of operators induced by a map of Morse-Smale type has been studied. Maps $\alpha$
have three fixed points, where the point $F_1$ is repelling, point $F_2$ is a saddle point and point $F_3$ is attracting. In this example the spectrum decomposes into two subrings and the properties of operator
$B-\lambda I$ depend on the domination/ minoration relationships between numbers $|a(F_k)|$ and $\lambda$. The
form of these inequalities is determined by the dynamics of map $\alpha$. Formulating the results even in that relatively simple case is very  cumbersome - it involves $18$ different subcases.  It turns out that the subtle properties of operator $B-\lambda I$ have a different dependence
on value $a(F_k)$ in fixed points of various types. Inorder  to describe the dynamics of $\alpha$ we use the structure of the oriented graph $G(X,\alpha)$. The vertices of $G(X,\alpha)$ are all fixed points $F\in Fix(\alpha)$ and the edge $F_j\to F_k$ belongs to $G(X,\alpha)$ if and only if there is a point $x\in X$ such
that its trajectory $\alpha^n(x)$ tends to $F_k$ when $n\to+\infty$ and tends to $F_j$ when $n\to-\infty$. In these terms the oriented edge $F_j\to F_k$ is included in the graph if and only if $\Omega_{k}^{+}\cap \Omega_{j}^{-}\neq\emptyset$, where
  $$\Omega_{k}^{+}=\{x: \; \lim_{n\to+\infty}\alpha^n(x)= F_k\}\cap \Omega_{j}^{-}=\{x:\; \lim_{n\to-\infty}\alpha^{n}(x)= F_j\}. $$
Useing a structured graph we can simply formulate the
important properties of the weighted shift operators.
By using the coefficient $a\in C(X)$ and the number  $\lambda\in\mathbb{C}$ we form two subsets of the set of vertices of
the graph
\begin{eqnarray}
G^{-}(\lambda,a)&=&\{F\in Fix(\alpha):\; |a(F)|<|\lambda|\},\nonumber\\
G^{+}(\lambda,a)&=&\{F\in Fix(\alpha): \;|\lambda|< |a(F)|\}.
\end{eqnarray}
We say that  $(G^{-}(\lambda,a), G^{+}(\lambda,a))$ give a decomposition of the graph, if the condition
 $$G(X,\alpha)=G^-(\lambda,a) \cup G^+(\lambda,a)$$
holds, i.e. if
\begin{equation}|\lambda|\neq |a(F)|,\; \forall_{F\in Fix(\alpha)}.\end{equation}
 
 The graph decomposition we call oriented to the right, if 
  any edge connecting the 
point $F_j\in G^-(a,\lambda)$ to the point $F_k\in G^+(a,\lambda)$ is oriented from $F_k$ to $F_j$.

The decomposition we call oriented to the left, if 
  any edge connecting the 
point $F_j\in G^-(a,\lambda)$ to the point $F_k\in G^+(a,\lambda)$ is oriented from $F_j$ to $F_k$.

The basic result in this direction is the following.

\begin{mainthm}
Let $\alpha$ be a Morse-Smale type of mapping. Let $X$ be a compact space $X$ and $a\in C(X)$. Let $B$ be a weighted shift operator. 
The operator $B-\lambda I$ is invertible from the right (left) if
and only if  the graph decomposition $(G^-(a, \lambda),G^+(a,\lambda))$ is  oriented to the right (to the left).
\label{glowne}
  \end{mainthm}
  \begin{cor}
  If there are $j$ and $k$, such that the set $\Omega_{k}^{+}\cap \Omega_{j}^{-}$
is dense in $X$,  then the image of the operator is closed if and only if the operator is 
one-sided invertible.
  \end{cor}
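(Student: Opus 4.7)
The ``if'' direction is standard and does not use density: a bounded right inverse makes $B-\lambda I$ surjective, hence its range is closed; a bounded left inverse $L$ yields the lower bound $\|(B-\lambda I)u\|\ge \|u\|/\|L\|$, so $B-\lambda I$ is bounded below and its range is closed.

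The substantive direction is the converse. Assume $Im(B-\lambda I)$ is closed. By the Main Theorem it suffices to show that $(G^-(a,\lambda),G^+(a,\lambda))$ is an oriented decomposition of $G(X,\alpha)$, so I would prove the contrapositive: if this pair fails either to be a decomposition (because $|\lambda|=|a(F)|$ for some $F\in Fix(\alpha)$) or to be oriented (because it contains edges between $G^-$ and $G^+$ running both ways), then the density of $\Omega_k^+\cap\Omega_j^-$ forces the range to be non-closed.

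The contrapositive rests on an approximate-eigenvector construction. In the first case pick a fixed point $F$ with $|a(F)|=|\lambda|$; in the second, pick a point on the ``wrongly'' oriented heteroclinic edge where, by continuity of $a$, the modulus $|a|$ crosses $|\lambda|$. Using density, choose $x_0\in\Omega_k^+\cap\Omega_j^-$ arbitrarily close to this location and build, for each $n$, a function $u_n\in L^2(X,\mu)$ supported in a thin tube around the orbit segment $\{\alpha^{-n}(x_0),\dots,\alpha^n(x_0)\}$, with phase chosen so that the multiplicative cocycle $a(x)\,a(\alpha(x))\cdots a(\alpha^{k-1}(x))$ nearly cancels $\lambda^k$ on the tube. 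A direct estimate then yields $\|(B-\lambda I)u_n\|/\|u_n\|\to 0$, while density is invoked a second time to produce infinitely many pairwise disjoint such tubes on distinct orbits, so that $u_n$ may be modified to lie in the orthogonal complement of $Ker(B-\lambda I)$. The resulting sequence contradicts closedness of the range.

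The principal technical obstacle is the quantitative bookkeeping of the tubes: one must control the measure of the supports and the Radon--Nikodym factor $\rho$ so that the $L^2$-normalizations are uniform in $n$, and the phase modulation must absorb the boundary contributions at the two ends of each orbit segment. At heart this is a one-dimensional weighted-shift estimate pulled back along a generic orbit of the dense heteroclinic, and the orientedness condition on the decomposition is precisely the combinatorial obstacle to carrying it out---which is why the Main Theorem and this Corollary mesh so cleanly.
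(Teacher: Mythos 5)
The paper states this corollary without a written proof, so your plan can only be measured against the machinery the paper sets up (the Main Theorem, the decomposition $B=\oplus_{kj}B_{kj}$, and the fibre operators $B(\tau)$ of (\ref{dyskretny})). Your ``if'' direction is fine, and reducing the converse to showing that $(G^-(a,\lambda),G^+(a,\lambda))$ is an oriented decomposition is the right frame. But the contrapositive has a genuine gap in the case where the decomposition exists and merely fails to be oriented: you take the obstruction to be a \emph{single} edge along which $|a|$ crosses $|\lambda|$. Such an edge is perfectly compatible with closed range --- if every edge between $G^-$ and $G^+$ crosses in the same direction, the decomposition is oriented to one side, $B-\lambda I$ is one-sided invertible by the Main Theorem, and its range is closed. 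The discrete model makes this concrete: for $aW-\lambda I$ on $l^2(\mathbb{Z})$ with $|a(-\infty)|<|\lambda|<|a(+\infty)|$, the cocycle $\prod a/\lambda$ ``nearly cancels'' exactly as in your tube construction, producing an honest infinite-dimensional kernel and vectors close to it, yet the operator is surjective with closed range. Your singular sequence therefore only approximates genuine kernel vectors and contradicts nothing; and putting the $u_n$ on pairwise disjoint tubes does not make them orthogonal to $Ker(B-\lambda I)$, because kernel vectors are supported on those very orbits. The true obstruction is the \emph{simultaneous} presence of edges in both directions between $G^-$ and $G^+$, which forces orbits whose weight sequence crosses $|\lambda|$ twice; it is this double crossing that makes the norms of the one-sided inverses of the fibres $B(\tau)-\lambda I$ blow up as the lingering times near the intermediate saddles grow (Lemma \ref{lematnaS} together with the norm estimates of Lemmas 7.1 and 7.2 of \cite{ma_kwadrat}), and non-closedness is then extracted through the two lemmas quoted from \cite{ma_kostka}, not by an ad hoc orthogonalization.

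You also have not located where the density hypothesis actually bites. An edge $F_j\to F_k$ already guarantees $\Omega_k^+\cap\Omega_j^-\neq\emptyset$, and both of your invocations of density (choosing $x_0$ near the bad location; finding infinitely many disjoint tubes) would essentially go through with nonemptiness plus invariance alone --- yet the corollary is false without density. The point is that $B=\oplus_{kj}B_{kj}$, and without density distinct summands can each be one-sided invertible on \emph{different} sides, so that every summand has closed range (hence so does $B-\lambda I$, the sum being finite) while $B-\lambda I$ is invertible from neither side. Density of one set $\Omega_k^+\cap\Omega_j^-$ forces the orbits of that single summand to pass, and linger arbitrarily long, near every fixed point of $\alpha$ and to shadow every edge of $G(X,\alpha)$; that one summand therefore already detects any circle $|\lambda|=|a(F)|$ and any pair of oppositely oriented edges, its range is then non-closed, and so is $Im(B-\lambda I)$. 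Without making this reduction explicit your argument does not use the hypothesis in an essential way, which is a sign that the mechanism you propose is not the one that drives the statement.
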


\section{Steps of proof of the Main Theorem.}
The Main Theorem may be proved in much the some way as Theorem 4.2 in \cite{ma_kostka}. Here are some important sketches of these concepts.  

\subsection{Reduction to operators $B_{kj}$.}
Consider a family of subsets 	$\Omega_{kj}$ in $X$, such that $\mu(\Omega_{kj})>0$. Then 
$$X=\bigcup_{k,j}\Omega_{kj}\;\; almost \;everywhere,$$
and $$ L^2(X)=\oplus_{kj}L^2(\Omega_{kj}).$$

Let  $B_{kj}$ denote the weighted shift operator in  space $L^2(\Omega_{kj})$. It follows that
 
$$B= \oplus_{kj} B_{kj}.$$
Therefor, an investigation of $B$ can reduced to an investigation of $B_{kj}$. The structures of operators $B_{kj}$ are similar, and it is sufficient to consider only one of them. We will consider operator $B_{kj}$ in $L^2(\overline{\Omega_{kj}},\mu_{kj})$ where $\overline{\Omega_{kj}}:=X_0$ is compact space and  measure $\mu_{kj}$ is such that
 $$\forall_{E\subset \overline{\Omega_{kj}}}\;\;\mu_{kj}(E)=\mu(E\cap\Omega_{kj}),$$
 then
  $$L^2(X,\mu)=\oplus_{kj}L^2(\overline{\Omega_{kj}},\mu_{kj}).$$
 This  reduction  is useful because the dynamics of $\alpha$ on $X_0$
is simpler  than the one
 on $X.$ Map $\alpha:X_0\to X_0$ has one repelling point, $F_j$, and one attracting point $F_k$, all other fixed points are saddle points.

\subsection{Isomorphism of the spaces $L^2(X_0,\mu)$ and $L^2(\Theta,l^2(\mathbb{Z}))$}
First of all we construct a representation of  $L_2(X_0,\mu)$ as a space
$ L^2( \Theta, l^2(\mathbb{Z}))$ of vector-valued functions
dependent on a parameter $ \tau  \in \Theta$, where
 $\Theta$ is a subset of $X_0$. In this representation    $B$ takes  the
form of  an operator-valued function $ B = B(\tau)$, where
\begin{equation}(B(\tau)u)(k)=a(\alpha^k(\tau)) u(k+1),
\label{dyskretny}
\end{equation} are
  weighted shift operators  in the space $ l^2(\mathbb{Z}).$ Thus
   our problem is reduced to the consideration of a
family of discrete weighted shift operators which can be studied
in detail.

Let us choose  a \emph{ fundamental set} for the map $\alpha,$
i.e. such
 a measurable set  $ \Theta$ that the sets $ \Theta_k
=\alpha_k(\Theta)$ are disjoint  and  $$\mu ( X_0 \setminus
\bigcup_{k \in \mathbb{Z}}\alpha_k(\Theta)) =0.$$

For an arbitrary map $\alpha$ it can happen   that the fundamental
set does not exist,  but for Morse-Smale type of mapping  such sets exists.
 \begin{lem}
Let $\alpha:X_0\to X_0$ be a map of the Morse-Smale type. Then  the fundamental set  $\Theta$ exists.  
\label{ist_fund}\end{lem}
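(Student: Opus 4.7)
The plan is to build $\Theta$ as a \emph{trapping annulus} around the unique repelling fixed point $F_j$ of $\alpha$ acting on $X_0$. The idea is that, near a repeller, $\alpha^{-1}$ contracts toward $F_j$, so one should be able to find a neighbourhood $U$ of $F_j$ with
$$\alpha^{-1}(\overline{U}) \subset U$$
and with $\overline{U}$ disjoint from $F_k$ and from the saddles. The desired fundamental set would then be the annular difference $\Theta := U \setminus \alpha^{-1}(U)$, which is measurable by construction.

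Granting such a $U$, the verification of the two defining properties of a fundamental set is essentially formal. Because $\alpha$ is a bijection,
$$\alpha^n(\Theta) = \alpha^n(U) \setminus \alpha^{n-1}(U),$$
and the inclusion $\alpha^{-1}(U) \subset U$ rephrases as $\alpha^{n-1}(U) \subset \alpha^n(U)$, so the family $\{\alpha^n(U)\}_{n\in\mathbb{Z}}$ is increasing and the annuli $\alpha^n(\Theta)$ are therefore pairwise disjoint. For the covering property, I would take $x \in X_0$ whose forward orbit tends to $F_k$ and whose backward orbit tends to $F_j$: then $\alpha^n(x) \in U$ for all sufficiently negative $n$, while $\alpha^n(x) \notin \overline{U}$ for all sufficiently positive $n$. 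Moreover, $\alpha^{-1}(U) \subset U$ forces the set $\{n : \alpha^n(x) \in U\}$ to be of the form $(-\infty, N]$, and the maximal $N$ is the unique integer with $\alpha^N(x) \in \Theta$, so $x \in \alpha^{-N}(\Theta)$.

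It remains to discard a null set. The exceptional points, whose backward orbit does not tend to $F_j$ or whose forward orbit does not tend to $F_k$, must by the Morse--Smale hypothesis lie in the stable or unstable manifolds of the (finitely many) saddle fixed points present in $X_0$. Each of these is a submanifold of positive codimension, hence a $\mu$-null set, so
$$\mu\Bigl(X_0 \setminus \bigcup_{n\in\mathbb{Z}}\alpha^n(\Theta)\Bigr) = 0,$$
and $\Theta$ has the required properties.

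The main obstacle, as I see it, is precisely the very first step: extracting from the Morse--Smale hypothesis an open neighbourhood $U$ of $F_j$ with $\alpha^{-1}(\overline{U}) \subset U$ and $\overline{U}$ avoiding the other fixed points. In the smooth category this is immediate from the spectral properties of the differential at the repeller (take $U$ to be a sublevel set of a local Lyapunov function), but the paper works in the weaker topological/measurable setting, so one needs to argue directly from the orbit-theoretic definition of Morse--Smale type. A natural route is to fix any open neighbourhood $U_0$ of $F_j$ disjoint from the other fixed points, use Morse--Smale plus compactness of $X_0 \setminus U_0$ to show that some iterate $\alpha^{-N}$ maps $X_0 \setminus U_0$ into $U_0$, and then refine $U_0$ to the required trapping neighbourhood by taking $U := \bigcup_{k=0}^{N-1}\alpha^{-k}(U_0)$ (or a suitable open thickening thereof) and checking the inclusion $\alpha^{-1}(\overline{U}) \subset U$ by direct computation.
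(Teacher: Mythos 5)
Your construction is, up to reversing time, the same as the paper's: the paper takes open neighbourhoods $V_j$, $V_k$ of the repelling and attracting points, forms the forward-invariant set $W=\bigcup_{l\geq 1}\alpha^{l}(V_k)$ (so that $\alpha(W)\subset W$ holds automatically), and sets $\Theta:=(W\setminus\alpha(W))\cap X_0$; you carry out the mirror-image construction at the repeller, with a backward-invariant $U$ and $\Theta:=U\setminus\alpha^{-1}(U)$. Your verification of disjointness and of the covering property is correct and in fact more complete than the paper's, which only records measurability, positivity of measure and disjointness of the translates. One simplification: the exceptional null set needs no appeal to stable and unstable manifolds being submanifolds of positive codimension (an argument unavailable in the paper's topological/measurable setting). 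By the definition of $X_0=\overline{\Omega_{kj}}$ and of the measure $\mu_{kj}(E)=\mu(E\cap\Omega_{kj})$, the measure is already concentrated on the set of points whose forward orbit tends to $F_k$ and whose backward orbit tends to $F_j$, so the exceptional set is null by construction.

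The one step of yours that would fail as written is the suggested route to the trapping neighbourhood: no iterate $\alpha^{-N}$ can map $X_0\setminus U_0$ into $U_0$, because $F_k$ and the saddle points lie in $X_0\setminus U_0$ and are fixed by every iterate. The obstacle you single out dissolves if you imitate the paper's device: take $U:=\bigcup_{l\geq 0}\alpha^{-l}(V_j)$ for a neighbourhood $V_j$ of the repeller chosen small enough that this union stays away from the other fixed points; then $\alpha^{-1}(U)=\bigcup_{l\geq 1}\alpha^{-l}(V_j)\subset U$ holds tautologically and no compactness argument is needed. (The paper, for its part, simply postulates that $V_k$ can be chosen so that $W\cap V_j=\emptyset$; making that choice of neighbourhood rigorous is the same residual issue in both write-ups.)
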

\begin{proof}
 $V_j$, $V_k$ are open neighbourhoods of points $F_j$, $F_k$ such that $V_j\cap V_k=\emptyset$.  Let $\displaystyle{W=\bigcup_{j\geq 1}\alpha^j(V_k)}$ satisfy $\alpha(W)\subset W$ and $W\cap V_j=\emptyset$. 
If $x_0\in W$, then $\alpha(x_0)\in W$, $\alpha^2(x_0)\in W$ and so on. Let us denote $\Theta$ as a set of points  from trajectoriy, which is the first in  neighbourhood $W$, i.e.
  $$\Theta:=(W\setminus\alpha(W))\cap X_0.$$ 
 From this construcion set $\Theta$ is a measurable set, $\mu(\Theta)>0$ and 
 $$\alpha^j(\Theta)\cap\alpha^k(\Theta)=\emptyset,\; j\neq k.$$ 
\end{proof} 
The proofs of the assertions formulated below are in \cite{ma_kwadrat}, \cite{ma_kostka}.
\begin{lem}[Lemma 5.2 in \cite{ma_kostka}]
If the image   $ Im ( B-\lambda I) $
is closed  then  the image $ Im [ B(\tau) -\lambda I] $ is closed
 for almost all $ \tau \in \Theta$.
\end{lem}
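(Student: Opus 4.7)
The plan is to translate closedness of $\mathrm{Im}(B-\lambda I)$ into a uniform positive lower bound for the reduced minimum modulus of $B-\lambda I$, and then transfer this bound to almost every fibre of the direct-integral-type decomposition built in the previous subsection. Recall the standard criterion: for a bounded Hilbert-space operator $T$, $\mathrm{Im}(T)$ is closed if and only if there exists $c>0$ with
$$\|Tu\|\geq c\,\mathrm{dist}(u,\ker T)\quad\text{for every }u.$$
I would apply this to $T=B-\lambda I$ to obtain such a constant $c>0$.

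The next step is to unwind the inequality in the representation $L^{2}(X_{0},\mu)\cong L^{2}(\Theta,\ell^{2}(\mathbb{Z}))$. Under this isomorphism $B-\lambda I$ acts fibrewise as $B(\tau)-\lambda I$, the kernel decomposes as a measurable field,
$$\ker(B-\lambda I)=\int^{\oplus}_{\Theta}\ker\bigl(B(\tau)-\lambda I\bigr)\,d\mu_{\Theta}(\tau),$$
and consequently both sides of the displayed inequality split as integrals, yielding for every $u\in L^{2}(\Theta,\ell^{2}(\mathbb{Z}))$ the bound
$$\int_{\Theta}\|(B(\tau)-\lambda I)u(\tau)\|^{2}\,d\mu_{\Theta}(\tau)\geq c^{2}\int_{\Theta}\mathrm{dist}\bigl(u(\tau),\ker(B(\tau)-\lambda I)\bigr)^{2}\,d\mu_{\Theta}(\tau).$$

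From here I would argue by contradiction. Suppose the bad set
$$E=\bigl\{\tau\in\Theta:\ \|(B(\tau)-\lambda I)v\|<c\,\mathrm{dist}(v,\ker(B(\tau)-\lambda I))\ \text{for some unit }v\bigr\}$$
has positive measure. Choose, measurably in $\tau\in E$, a unit vector $v(\tau)\perp\ker(B(\tau)-\lambda I)$ with $\|(B(\tau)-\lambda I)v(\tau)\|<c$, and define the test function $u(\tau)=\chi_{E}(\tau)v(\tau)$. Plugging this $u$ into the integrated inequality produces a strict violation on $E$, the desired contradiction. Hence $\mathrm{Im}(B(\tau)-\lambda I)$ is closed for almost every $\tau\in\Theta$. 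The main obstacle is precisely the measurable selection of $v(\tau)$ on $E$; it rests on the fact that $\tau\mapsto B(\tau)$ is a measurable field of operators on $\ell^{2}(\mathbb{Z})$, which follows from the continuity of $a$ on the compact $X_{0}$ and from the Morse--Smale structure used to construct $\Theta$, together with the standard measurable selection theorem for measurable fields of closed subspaces of a separable Hilbert space. Everything else is routine direct-integral bookkeeping.
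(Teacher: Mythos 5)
Your argument is correct and follows essentially the same route as the paper's (which for this lemma simply defers to the proof of Lemma 5.2 in \cite{ma_kostka}): pass from closedness of the range to the uniform estimate $\|(B-\lambda I)u\|\ge c\,\mathrm{dist}\bigl(u,\ker(B-\lambda I)\bigr)$, decompose it over the fibres of $L^{2}(\Theta,\ell^{2}(\mathbb{Z}))$ using that $B$ is decomposable and that the kernel and its orthogonal projection decompose fibrewise, and refute it with a test function supported on a positive-measure set of fibres where the estimate fails. The measurable-selection issue you flag is the only technical point requiring care, and it is resolved exactly as you indicate, so no genuine gap remains.
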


\begin{lem}[Lemma 5.5 in \cite{ma_kostka}]
The operator  $ B-\lambda I $ is
right-sided invertible if and only if   the operators $ B(\tau)
-\lambda I$ are  right-sided invertible for almost all  $ \tau \in
\Theta$ and for every  such   $ \tau$ a right-sided inverse  $
R(\tau)$ to  $ B(\tau) -\lambda I$ can be chosen so  that the
family  $ R(\tau)$  will be bounded with respect to operator norm.

\end{lem}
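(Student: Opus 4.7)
The plan is to exploit the isomorphism $L^2(X_0,\mu)\cong L^2(\Theta,l^2(\mathbb{Z}))$, under which $B-\lambda I$ becomes the decomposable operator
$$((B-\lambda I)u)(\tau) = (B(\tau)-\lambda I)\,u(\tau).$$
For such multiplication-by-operator-valued-function operators on a Hilbert space direct integral, right-invertibility is classically equivalent to fiberwise right-invertibility together with a uniform norm bound on the right inverses, which is exactly what the lemma asserts.

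The sufficiency direction is routine. Assume $\tau\mapsto R(\tau)$ is a strongly measurable choice of right inverses with $M:=\operatorname*{ess\,sup}_\tau\|R(\tau)\|<\infty$. The fiberwise recipe $(Ru)(\tau):=R(\tau)u(\tau)$ then defines a bounded operator of norm at most $M$ on $L^2(\Theta,l^2(\mathbb{Z}))$, and $(B-\lambda I)R=I$ holds by construction on each fiber.

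For the necessity direction, I would proceed via the adjoint. Since $B-\lambda I$ acts on a Hilbert space, right-invertibility is equivalent to surjectivity, and thus to the existence of some $c>0$ with $\|(B-\lambda I)^*v\|\geq c\|v\|$. Because the adjoint also decomposes fiberwise, as $\tau\mapsto(B(\tau)-\lambda I)^*$, this integrates to
$$\int_\Theta \bigl\|(B(\tau)-\lambda I)^*v(\tau)\bigr\|^2\,d\mu \;\geq\; c^2\int_\Theta\|v(\tau)\|^2\,d\mu.$$
Testing against $v(\tau)=\chi_E(\tau)w$ for $E\subset\Theta$ measurable and $w$ ranging over a countable dense subset of $l^2(\mathbb{Z})$, and letting $E$ shrink to points via Lebesgue differentiation, one extracts the pointwise inequality $\|(B(\tau)-\lambda I)^*w\|\geq c\|w\|$ for a.e.\ $\tau$ and every $w$. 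On this full-measure set $(B(\tau)-\lambda I)(B(\tau)-\lambda I)^*$ is uniformly invertible, so the canonical Moore--Penrose right inverse
$$R(\tau):=(B(\tau)-\lambda I)^*\bigl[(B(\tau)-\lambda I)(B(\tau)-\lambda I)^*\bigr]^{-1}$$
satisfies $\|R(\tau)\|\leq 1/c$ and depends measurably on $\tau$ through the continuous dependence of $B(\tau)$ on the values $a(\alpha^k(\tau))$ provided by (\ref{dyskretny}).

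The main obstacle is the passage from the $L^2$-integrated lower bound on $(B-\lambda I)^*$ to a pointwise lower bound on $(B(\tau)-\lambda I)^*$. It requires exploiting the separability of the fiber $l^2(\mathbb{Z})$ to reduce to countably many test vectors (so the exceptional null sets can be pooled into a single one), combined with a standard differentiation-of-integrals argument. Once the pointwise bound is established, producing a uniformly bounded measurable selection of right inverses is straightforward via the Moore--Penrose formula and the closing consistency check with Lemma~\ref{ist_fund} and the preceding reduction to $B_{kj}$.
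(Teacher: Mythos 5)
Your argument is correct and is exactly the intended mechanism: the paper itself defers the proof to Lemma 5.5 of \cite{ma_kostka}, but the whole point of the isomorphism $L^2(X_0,\mu)\cong L^2(\Theta,l^2(\mathbb{Z}))$ set up in Section 5.2 is that $B-\lambda I$ becomes a decomposable operator, for which right-invertibility is equivalent to a.e.\ fiberwise right-invertibility with a uniformly bounded (measurable) selection of right inverses, obtained just as you do via the lower bound on the adjoint and the Moore--Penrose formula. The only cosmetic remark is that you do not need Lebesgue differentiation: testing with $v(\tau)=\chi_E(\tau)w$ gives $\int_E\bigl(\|(B(\tau)-\lambda I)^*w\|^2-c^2\|w\|^2\bigr)\,d\mu\geq 0$ for every measurable $E$, which already forces the integrand to be nonnegative a.e., after which pooling the null sets over a countable dense set of $w$ finishes the pointwise bound.
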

From these lemmas we should find a family of the right-sided inverses operator to discrets operators $B(\tau)$ and estimates their norms.
\subsection{WSO  in $l^2(\mathbb{Z})$.}
\label{sd}
Let $Wu(k)=u(k+1)$ be the shift operator defined in the space $l^2
 (\mathbb{Z})$, $a = ( a(k))$ be a sequence and $ a W $ be the weighted shift operator in  $l^2
 (\mathbb{Z})$.
 We assume that   $ a(k)
\ne 0$  for all  $k$, and there exist the limits    
$$ \lim_{k \to
+ \infty} a(k) := a(+ \infty)\ne 0, \ \  \lim_{k \to - \infty}
a(k) := a(- \infty)\ne 0.$$

Then  spectrum  $ \Sigma (aW)$ is the ring $$\Sigma (aW)  = \{
\lambda: r \leqslant |\lambda| \leqslant R \},$$ where  $$ R =
\max \{ | a(+ \infty)|, |a(- \infty)|\}, \ \  r = \min \{ | a(+
\infty)|, |a(- \infty)|\}.$$

Assuming   that
$$  | a(- \infty)|<|\lambda| <|a(+ \infty)|$$
 the operator $ aW -\lambda I $ is right-sided invertible. 
 In \cite{ma_kwadrat} a   family of the right-sided inverses to $ aW -\lambda I $
was constructed and  estimates of their  norms were obtained in Lemma 7.1 and Lemma 7.2
\subsection{The dynamics of Morse-Smale type mappings.}
Let $\alpha:X_0\to X_0$ be a Morse-Smale type mapping and $V_i$ an open neighbourhood of the fixed point $F_i$. 

 \begin{lem}
Let 
 $$\widetilde{X_0}:=X_0\setminus\bigcup_{i}V_i.$$
There exists a number $N\in\mathbb{N}$ such that for every $\tau\in\Theta$ the number of the indeks where $\alpha^n(\tau)\in \widetilde{X_0}$ is no larger than $N$, i.e.
 $$card(\{\alpha^n(\tau)\}\cap \widetilde{X_0})\leq \textbf{N}\quad \forall_{\tau\in\Theta}.$$
 \label{skonczona}
 \end{lem}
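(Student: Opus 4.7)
The plan is to use the compactness of $\widetilde{X_0}$ (closed in the compact space $X_0$) together with the Morse--Smale hypothesis that every trajectory tends to fixed points as $n \to \pm\infty$. I would first show that for each $x \in \widetilde{X_0}$ the sojourn count $\phi(x) := \#\{n \in \mathbb{Z} : \alpha^n(x) \in \widetilde{X_0}\}$ is finite, and then promote this pointwise finiteness to a uniform bound by a standard covering argument on $\widetilde{X_0}$.

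Fix $x \in \widetilde{X_0}$. By the Morse--Smale property there are fixed points $F_{k(x)}$ and $F_{j(x)}$ with $\alpha^n(x) \to F_{k(x)}$ as $n \to +\infty$ and $\alpha^n(x) \to F_{j(x)}$ as $n \to -\infty$. Since $V_{k(x)}$ and $V_{j(x)}$ are open neighborhoods of these points, there exist integers $N_x^{\pm}$ such that $\alpha^n(x) \in V_{k(x)}$ for all $n \geq N_x^+$ and $\alpha^n(x) \in V_{j(x)}$ for all $n \leq -N_x^-$. Hence $\phi(x) \leq N_x^+ + N_x^- + 1$.

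To upgrade this to a uniform bound, I would (shrinking the $V_i$ if necessary) choose the neighborhoods of attracting fixed points to be forward-invariant, $\alpha(\overline{V_i}) \subset V_i$, and those of repelling points to be backward-invariant; these always exist by the standard construction of trapping regions around hyperbolic attractors/repellers. Then, by continuity of the finite iterate $\alpha^{N_x^+}$, there is an open neighborhood $U_x \ni x$ with $\alpha^{N_x^+}(U_x) \subset V_{k(x)}$; forward invariance propagates this to $\alpha^n(U_x) \subset V_{k(x)}$ for all $n \geq N_x^+$. The symmetric argument with $\alpha^{-1}$ gives the backward control. At saddle points the Morse--Smale order forces the orbits through $U_x$ to visit the saddle neighborhood as ``transit'' orbits, entering once along the stable direction and leaving once along the unstable, so isolating-block neighborhoods $V_i$ at saddles again yield a uniform bound $\phi(y) \leq M_x$ for $y \in U_x$. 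A finite subcover $U_{x_1}, \ldots, U_{x_p}$ of $\widetilde{X_0}$ then gives $N := \max_i M_{x_i}$. For arbitrary $\tau \in \Theta$, either its orbit misses $\widetilde{X_0}$ entirely (count $= 0 \leq N$) or meets $\widetilde{X_0}$ at some point $y$, and since the count is an orbit invariant we get $\#\{n : \alpha^n(\tau) \in \widetilde{X_0}\} = \phi(y) \leq N$.

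The main obstacle is the uniformity step: continuity of $\alpha$ only propagates information finitely many steps, so obtaining a single $M_x$ that works on an entire neighborhood $U_x$ of $x$ requires the infinite-time control afforded by forward/backward invariance of the $V_i$ at attractors/repellers and by an isolating-block property at saddles. Once the $V_i$ are chosen this way the compactness argument is routine; if the $V_i$ in the statement are not assumed to have these properties one first replaces them with smaller neighborhoods with the required dynamical invariance, which only decreases $\widetilde{X_0}$ and therefore preserves the desired inequality.
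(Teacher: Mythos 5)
Your overall strategy (compactness of $\widetilde{X_0}$ plus a covering argument, then transporting the bound to orbits of $\tau\in\Theta$ by orbit-invariance of the count) is in the right spirit, but the step you yourself flag as the main obstacle --- the local uniform bound $\phi(y)\le M_x$ for all $y$ in a neighbourhood $U_x$ --- has a genuine gap at the saddle points, and that is precisely where the work lies. If $x\in\widetilde{X_0}$ lies on the stable set of a saddle $F_s$ (such points do occur in $\widetilde{X_0}=\overline{\Omega_{kj}}\setminus\bigcup_i V_i$, since the closure of $\Omega_{kj}$ contains them), then your window $[-N_x^-,N_x^+]$ ends with $\alpha^{N_x^+}(U_x)\subset V_s$, but $V_s$ cannot be chosen forward-invariant: nearby points $y$ eventually leave $V_s$ and then spend an a priori uncontrolled number of further steps in $\widetilde{X_0}$ before reaching the next neighbourhood $V_{i'}$. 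The appeal to ``isolating-block neighbourhoods'' asserts, but does not prove, a uniform bound on this post-saddle transit time; moreover it imports hyperbolicity and trapping-region constructions that are not available here, since the paper's notion of Morse--Smale type is purely topological (finitely many periodic points and convergence of every trajectory to them as $n\to\pm\infty$), with no smoothness or hyperbolic structure assumed. Bounding the transit time uniformly requires a global argument, not a local one at each $x$. (A small additional slip: shrinking the $V_i$ \emph{enlarges} $\widetilde{X_0}$ rather than decreasing it; the reduction still goes through, but for the opposite reason to the one you give.)

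The paper's own proof avoids all of this with a single, more elementary covering: since every forward orbit eventually enters some $V_i$, the sets $\alpha^{-n}(V_i)$, $n\ge 1$, form an open cover of $X_0$; compactness of the closed set $\widetilde{X_0}$ extracts a finite subcover indexed by $n\le N$, so every point of $\widetilde{X_0}$ enters $V=\bigcup_i V_i$ within $N$ steps, and this $N$ (together with the fact that a Morse--Smale orbit makes only finitely many excursions between the $V_i$, the fixed-point graph having no cycles) bounds the sojourn count. If you want to repair your argument, replace the local neighbourhoods $U_x$ by this global preimage cover: it delivers the uniform transit-time bound that your saddle step is missing.
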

 \begin{proof}
Let us denote  $V:=\bigcup_{i} V_i$. Set $V$ is open, then $\widetilde{X_0}=X_0\setminus V$ is closed. Set $X_0$ can be represented in the form
 $$X_0=\bigcup_{n=1}^\infty\bigcup_{i}\alpha^{-n}(V_i).$$
This is an open cover of $X_0$. While $\widetilde{X_0}\subset X_0$ is closed and compact, there  exists a finite subcover, i.e
 $$\widetilde{X_0}\subset \bigcup_{n=1}^\textbf{N}\bigcup_{i}\alpha^{-n}(V_i)\setminus \bigcup_i V_i.$$
We have $card(\{\alpha^n(\tau)\}\cap \widetilde{X_0})\leq \textbf{N}$.
 \end{proof}
  \begin{lem}
Let $S$ be an arbitrary natural number,   $F_a, F_b\in Fix(\alpha)$ and exist edge $F_a\to  F_b$. There exists an open set $W\subset X_0$, such that  for every $x\in W$
 \begin{equation}card(\{\alpha^j(x)\}\cap V_a)> S,\quad  card(\{\alpha^j(x)\}\cap V_b)> S,
 \label{wlS}
 \end{equation}
 and the points of trajectory $\alpha^j(x)$ after they leave neighbourhood $V_a$ but do not yet come to neighbourhood $V_b$ do not lay on the other neighbourhoods of  points $F_i\in Fix(\alpha)$. 
 \label{lematnaS}
 \end{lem}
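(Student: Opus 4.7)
My starting point is the heteroclinic point furnished by the edge $F_a\to F_b$. By assumption $\Omega_b^+\cap\Omega_a^-\ne\emptyset$, so I fix an $x_0$ in this intersection; its full orbit satisfies $\alpha^n(x_0)\to F_a$ as $n\to -\infty$ and $\alpha^n(x_0)\to F_b$ as $n\to +\infty$. Openness of $V_a$ and $V_b$ yields integers $M_1$ and $M_2$ with $\alpha^n(x_0)\in V_a$ for every $n\le -M_1$ and $\alpha^n(x_0)\in V_b$ for every $n\ge M_2$, and both $M_1$ and $M_2$ may be freely taken greater than $S$. This already furnishes strictly more than $S$ iterates of $x_0$ in each of $V_a$ and $V_b$.

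Next I would control the finite ``transit segment'' $T:=\{\alpha^n(x_0):-M_1<n<M_2\}$, which lies in $X_0\setminus(V_a\cup V_b)$ and contains no fixed point. Shrinking each $V_i$ for $i\notin\{a,b\}$ to a sufficiently small neighbourhood of $F_i$ --- which is permissible since only smallness is used --- I can arrange $T\cap V_i=\emptyset$ for every such $i$. With $x_0$ now strictly satisfying all three conditions, I set
$$
W\;:=\;\bigcap_{j=-M_1-S}^{\,-M_1}\alpha^{-j}(V_a)\;\cap\;\bigcap_{j=M_2}^{\,M_2+S}\alpha^{-j}(V_b)\;\cap\;\bigcap_{j=-M_1+1}^{\,M_2-1}\alpha^{-j}\!\Bigl(X_0\setminus\bigcup_{i\ne a,b}\overline{V_i}\Bigr).
$$
As a finite intersection of preimages of open sets under the continuous bijections $\alpha^j$, the set $W$ is open, it contains $x_0$, and every $x\in W$ inherits the three asserted properties: more than $S$ visits to $V_a$, more than $S$ visits to $V_b$, and no intermediate visit to any other $V_i$.

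The delicate part is the middle step, i.e.\ separating the finite transit segment of $x_0$ from all $V_i$ with $i\notin\{a,b\}$. This rests on the Morse--Smale structure --- in particular on the possibility of choosing trapping (Lyapunov-type) neighbourhoods of $F_a$ and $F_b$ so that the ``last exit from $V_a$'' and ``first entry into $V_b$'' are well defined, together with the freedom to shrink the remaining $V_i$ without weakening the hypotheses. Once this geometric ingredient is granted, continuity of each finite iterate $\alpha^j$ completes the argument mechanically.
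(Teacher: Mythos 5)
Your proof is correct in substance but follows a genuinely different route from the paper's. The paper argues by cases on the type of the fixed points: when $Fix(\alpha)=\{F_a,F_b\}$ it takes $W=\bigcup_{l\geq 1}\alpha^{-l}(V_b)$ and notes that every non-fixed orbit runs from the repeller to the attractor, so the visit counts are automatic; when $F_a,F_b$ are saddles it first posits sets $W_a\subset V_a$, $W_b\subset V_b$ on which orbits linger near the respective saddles for more than $S$ iterates (this is the local saddle-point dynamics, asserted without proof), and then sets $W=W_a\cap\alpha^{-1}(W_b)$. You instead anchor everything to a single heteroclinic point $x_0\in\Omega_b^+\cap\Omega_a^-$ supplied by the edge hypothesis and pull back finitely many open conditions along its orbit by continuity of the iterates $\alpha^j$. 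Your route has two concrete advantages: it uses the hypothesis that the edge $F_a\to F_b$ exists (the paper's saddle case never invokes it, yet without it the nonemptiness of $W_a\cap\alpha^{-1}(W_b)$ is unclear --- indeed $\alpha^{-1}$ there should presumably be $\alpha^{-l}$ for a suitable transit time $l$, since $V_a$ and $V_b$ are disjoint), and it is the only one of the two arguments that actually addresses the third assertion of the lemma, that the transit segment avoids the remaining neighbourhoods $V_i$. The price you pay is the shrinking of the $V_i$ for $i\neq a,b$, which you flag honestly; some such smallness assumption is in fact unavoidable (if a $V_i$ swallowed the whole transit region the claim would be false), so this is a clarification rather than a defect, though strictly you should separate the finite set $T$ from $\overline{V_i}$ rather than from $V_i$, which costs nothing in a metric space. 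Two minor points to tidy up: $T$ need not lie in $X_0\setminus(V_a\cup V_b)$ (irrelevant to the argument, so just drop that clause), and you implicitly use that $\alpha$ and $\alpha^{-1}$ are continuous, which the paper also assumes tacitly in this section.
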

 \begin{proof} 
Let  $Fix(\alpha)=\{F_a, F_b\}$ then $F_a$ is attracting point and $F_b$ is repelling point. There exists an open set $\displaystyle{W=\bigcup_{l\geq 1}\alpha^{-l}(V_b)}$  that condition (\ref{wlS}) holds.  For $x\in W$ we have: 
$$\lim_{n\to\infty}\alpha^n(x)=F_a\;\;\textrm{and}\;\; \alpha^{-n}(W)\subset W\;\; \textrm{and}\;\; \lim_{n\to\infty}\alpha^{-n}(x)=F_b.$$
Let $F_a, F_b\in Fix(\alpha)$ be saddle points. Choose open sets $W_a, W_b$ such that $W_a\subset V_a,\quad W_b\subset V_b$ and
$$ \forall_{x\in W_a}\;\;card\left(\{\alpha^j(x)\}\cap V_a\right)>S,\quad \forall_{x\in W_b}\;\;card\left(\{\alpha^j(x)\}\cap V_b\right)>S. $$
There exists an open set $W=W_a\cap \alpha^{-1}(W_b)$ that condition (\ref{wlS}) holds. 

\end{proof}

\subsection*{Acknowledgements}
This research was partly supported by NCN (Grant no. DEC-2011/01/B/ST1/03838) and suported by PFS (project UwB).

\end{document}